\newtheorem{theorem}{Theorem}[section]
\newtheorem{conjecture}[theorem]{Conjecture}
\newtheorem{corollary}[theorem]{Corollary}
\newtheorem{definition}[theorem]{Definition}
\newtheorem{example}[theorem]{Example}
\newtheorem{lemma}[theorem]{Lemma}
\newtheorem{notation}[theorem]{Notation}
\newtheorem{proposition}[theorem]{Proposition}
\newtheorem{remark}[theorem]{Remark}
\newcommand{\Z}{\mathbb Z}
\newcommand{\N}{\mathbb N}
\DeclareMathOperator{\sd}{sdeg}
\begin{document}

\title{The Schur degree of additive sets}

\author{S. Eliahou and M.P. Revuelta}
\date{}

\maketitle

\begin{abstract} Let $(G,+)$ be an abelian group. A subset of $G$ is \emph{sumfree} if it contains no elements $x,y,z$ such that $x+y=z$. We extend this concept by introducing the \emph{Schur degree} of a subset of $G$, where Schur degree $1$ corresponds to sumfree. The classical inequality $S(n) \le R_n(3)-2$, between the Schur number $S(n)$ and the Ramsey number $R_n(3)=R(3,\dots,3)$, is shown to remain valid in a wider context, involving the Schur degree of certain subsets of $G$. Recursive upper bounds are known for $R_n(3)$ but not for $S(n)$ so far. We formulate a conjecture which, if true, would fill this gap. Indeed, our study of the Schur degree leads us to conjecture $S(n) \le n(S(n-1)+1)$ for all $n \ge 2$. If true, it would yield substantially better upper bounds on the Schur numbers, e.g. $S(6) \le 966$ conjecturally, whereas all is known so far is $536 \le S(6) \le 1836$.
\end{abstract}

\begin{quote}
Keywords: Sumfree; Schur numbers; Ramsey numbers; Discrete derivative; Minors. \vspace{0.15cm} \\ 
MSC Classification: 05D10, 11B75, 11P70
\end{quote}

\section{Introduction}

For $a,b \in \Z$, let $[a,b]=\{z \in \Z \mid a \le z \le b\}$ and $[a,\infty[=\{z \in \Z \mid a \le z\}$ denote the integer intervals they span. Denote $\N=\{0,1,2,\dots\}$ and $\N_+=\N\setminus \{0\}$. 

\smallskip
A subset of $\Z$ is \emph{sumfree} if it contains no elements $x,y,z$ such that $x+y=z$. The problem of partitioning $[1,N]$ into as few sumfree parts as possible was initiated by Schur~\cite{schur}. Given $n \in \N_+$, Schur established the existence of a number $S(n)$ such that $[1,N]$ can be partitioned into $n$ sumfree parts if and only if $N \le S(n)$. The $S(n)$ are called the Schur numbers and, despite more than a century in existence, remain poorly understood at the time of writing. Their only currently known values are
\begin{equation}\label{schur numbers}
\big(S(1),\,\, S(2),\,\, S(3),\,\, S(4),\,\, S(5)\big) = (1,\,\, 4,\,\, 13,\,\, 44,\,\, 160).
\end{equation}
See Section~\ref{sec comparisons} for more details. 
In his paper, Schur proved the following upper bound and recursive lower bound on the $S(n)$ for $n \ge 2$, namely
\begin{equation}\label{schur bounds}
3S(n-1)+1 \,\le\, S(n) \,\le\, n!e,
\end{equation}
leading in particular to $S(n) \ge (3^n-1)/2$ for all $n \ge 2$.

\smallskip
For $n \ge 1$, the $n$-color Ramsey number $R_n(3)=R(3,\dots,3)$ denotes the smallest $N$ such that, for any $n$-coloring of the edges of the complete graph $K_N$ on $N$ vertices, there is a monochromatic triangle. See~\cite{radzi} for an extensive dynamic survey on this topic. Only three of the numbers $R_n(3)$ are currently known, namely
\begin{equation}\label{ramsey numbers}
\big(R_1(3),\,\, R_2(3),\,\, R_3(3)\big) = (3,\,\, 6,\,\, 17).
\end{equation}
As for $n=4$, the presently known bounds are $51 \le R_4(3) \le 62$. It is conjectured in~\cite{XuR} that $R_4(3)$ equals $51$. Similarly to the upper bound in~\eqref{schur bounds}, it was shown in~\cite{GG} that $R_n(3) \le n!e+1$ for all $n \ge 1$. This bound has later been improved to $$R_n(3) \le n!(e-1/6)+1$$ for all $n \ge 4$ in~\cite{XuXC}. See also~\cite{E}, where the conjecture $R_4(3)=51$ is shown to imply $R_n(3) \le n!(e-5/8)+1$ for all $n \ge 4$.

\smallskip
In fact, there is a well known relationship between the Schur and the Ramsey numbers, namely
\begin{equation}\label{S and R v1}
S(n) \le R_n(3)-2.
\end{equation}
See e.g.~\cite{soifer}. That is, \emph{if the set $[1,N]$ admits a partition into $n$ sumfree parts, then $N \le R_{n}(3)-2$.} 

\smallskip

We shall show here that \eqref{S and R v1} holds in a more general context. Let $(G,+)$ be an abelian group. As in $\Z$, a subset of $G$ is \emph{sumfree} if it contains no elements $x,y,z$ such that $x+y=z$. Given a finite sequence $A=(a_1,\dots,a_N)$ in $G$, let us denote by $\hat{A}$ the set of all \emph{block sums} $a_{k}+\dots+a_\ell$ of $A$, where $1 \le k \le \ell \le N$. For instance, if $A=(1,\dots,1)$ of length $N$ in $G=\Z$, then $\hat{A}=[1,N]$. 

In this paper, we are concerned with partitioning subsets of $G$ of the form $\hat{A}$ into as few sumfree parts as possible. As just noted, this includes Schur's original problem for the integer intervals $[1,N]$.  Our extension of~\eqref{S and R v1} to this more general setting states that \emph{if $A$ is a sequence in $G$ of length $|A|=N$ and if $\hat{A}$ can be covered by $n$ sumfree parts, then $N \le R_{n}(3)-2$}.

Currently, the best available theoretical upper bound on $S(n)$ for $n \ge 4$ is the one provided by~\eqref{S and R v1}. While the Ramsey numbers $R_n(3)$ satisfy the well known recursive upper bound 
$$
R_n(3) \le n(R_{n-1}(3)-1)+2
$$
for all $n \ge 2$~\cite[Theorem 6, p. 6]{GG}, no similar statement is known yet for the $S(n)$. Here we fill this gap, at least conjecturally, as an outcome of our study of sumfree partitions of sets of the form $\hat{A}$. Indeed, as we shall see, that study leads us to conjecture the following recursive upper bound, for all $n \ge 2$:
\begin{equation}\label{conj S}
S(n) \le n(S(n-1)+1).
\end{equation}

The contents of this paper are as follows. In Section~\ref{sec basic}, we introduce the Schur degree and the basic notions and tools needed in the sequel. In Section~\ref{sec schur degree}, we prove initial properties of the Schur degree and illustrate them with selected examples in $\Z$. Our main result, an extension of \eqref{S and R v1} to sets $\hat{A}$ bounding their Schur degree with the Ramsey numbers $R_n(3)$, is proved in Section~\ref{sec comparison}. The material developed so far leads us in Section~\ref{sec conjectures} to the conjectural recursive upper bound~\eqref{conj S}, a substantial would-be improvement over~\eqref{S and R v1}.

\section{Basic notions and tools}\label{sec basic}

Here is the main notion introduced and studied in this paper.

\begin{definition} Let $(G,+)$ be an abelian group. Let $X \subseteq G$ be a subset. We define the \emph{Schur degree} of $X$, denoted $\sd(X)$, as the smallest $n \ge 1$ such that $X$ can be covered by $n$ sumfree subsets. If no such $n$ exists, we set $\sd(X)=\infty$.
\end{definition}
For instance, $\sd(X)=1$ if and only if $X$ is sumfree, whereas $\sd(X)=\infty$ whenever $0 \in X$, as $\{0\}$ is not sumfree. As another instance, in $\N$ we have
\begin{equation}\label{eq schur}
\sd([1,S(n)]) = n, \quad \sd([1,S(n)+1]) = n+1
\end{equation}
by definition of $S(n)$. Equivalently, $\sd([1,N]) \le n \iff N \le S(n)$.

\smallskip
Measuring the Schur degree of most subsets is likely to remain an extremely difficult task, even for the integer intervals $[1,N]$ as witnessed by the still highly mysterious Schur numbers $S(n)$. In this paper, we focus on subsets of a certain form $\hat{A}$, generalizing the intervals $[1,N]$ and introduced below.

\subsection{Block sums}

Let $(G,+)$ be an abelian group. Let $A=(a_1,\dots,a_N)$ be a finite sequence in $G$. We denote by $|A|=N$ its length and by $\sigma(A)=\sum_i a_i$ the sum of its elements. 

A \emph{block} in $A$ is any nonempty subsequence of consecutive elements of $A$. That is, any subsequence of the form 
\[
B=(a_{i},
\dots,a_j)
\] 
for some $1 \le i \le j \le N$. A \emph{block sum} in $A$ is a sum $\sigma(B)$ where $B$ is any block in $A$, i.e. any element in $G$ of the form $a_i+\dots+a_j$ for some $1 \le i \le j \le N$.

\begin{notation} Let $A=(a_1,\dots,a_N)$ be a sequence in $G$. We denote by 
$$\hat{A} = \{\sigma(B) \mid B \textrm{ is a block in }A \},$$ the set of block sums in $A$. 
\end{notation}
For instance, if $A=(1,\dots,1)$ of length $N$ in $\Z$, then $\hat{A}=[1,N]$ as noted above. In this paper, we initiate the study of the Schur degree of subsets of the form $\hat{A}$ for finite sequences $A$ in $G$, with the hope to shed some light on the basic case $[1,N]$ in $\Z$. Our main result is Theorem~\ref{main thm}, an extension of \eqref{S and R v1} to this context.

\subsection{Minors}

We show here that the association $A \mapsto \hat{A}$ is monotone with respect to taking minors, as defined below.

\begin{definition} Let $A=(a_1,\dots,a_N)$ be a sequence in the abelian group $G$.

\noindent
$\bullet$ An \emph{elementary contraction} of $A$ is any sequence $\overline{A}$ obtained by replacing a block $B$ in $A$ by its sum $\sigma(B)$. That is, if $B=(a_i,\dots,a_j)$ for some $1 \le i \le j \le N$, then
$$
\overline{A} \ = \ (a_1,\dots,a_{i-1},\sigma(B),a_{j+1},\dots,a_N).
$$
\noindent
$\bullet$ A \emph{contraction} of $A$ is any sequence obtained from $A$ by successive elementary contractions.
\end{definition}
For instance, let $A=(1,2,3,4)$. Then $(3,3,4)$, $(6,4)$ and $(3,7)$ are contractions of $A$, the first two ones being elementary. See also~\cite{ABERS}.

\begin{definition} Let $A=(a_1,\dots,a_N)$ be a sequence in $G$. A \emph{minor} of $A$ is either a block $B$ in $A$ or a contraction $\overline{A}$ of $A$.
\end{definition}

\begin{proposition}\label{prop minor} Let $G$ be an abelian group. Let $A$ be a finite sequence in $G$. If $B$ is a minor of $A$, then $\hat{B} \subseteq \hat{A}$.
\end{proposition}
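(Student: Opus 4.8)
The plan is to split according to the two kinds of minor — a block, or a contraction — and to reduce the contraction case to a single elementary contraction by induction on the number of steps.

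First suppose $B=(a_i,\dots,a_j)$ is a block in $A$. Every block in $B$ has the form $(a_k,\dots,a_\ell)$ with $i \le k \le \ell \le j$, hence is also a block in $A$; therefore each block sum of $B$ is a block sum of $A$, which gives $\hat{B}\subseteq\hat{A}$ immediately.

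Next suppose $B$ is a contraction of $A$, reached by $m\ge 0$ elementary contractions, and argue by induction on $m$. If $m=0$ then $B=A$ and there is nothing to prove. If $m\ge 1$, write $B=\overline{A'}$ where $A'$ is obtained from $A$ by $m-1$ elementary contractions; by the inductive hypothesis $\hat{A'}\subseteq\hat{A}$, so it is enough to handle a single elementary contraction, i.e. to show $\hat{\overline{A'}}\subseteq\hat{A'}$. Renaming, assume $\overline{A}$ is obtained from $A=(a_1,\dots,a_N)$ by replacing the block $(a_i,\dots,a_j)$ by its sum $s=a_i+\dots+a_j$, so that
\[
\overline{A}=(a_1,\dots,a_{i-1},\,s,\,a_{j+1},\dots,a_N),
\]
with $s$ occupying position $i$ of $\overline{A}$. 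Let $C$ be an arbitrary block of $\overline{A}$; I claim $\sigma(C)\in\hat{A}$. If $C$ does not meet position $i$, then $C$ is a block of $(a_1,\dots,a_{i-1})$ or of $(a_{j+1},\dots,a_N)$, hence a block of $A$, and we are done. Otherwise $C=(a_k,\dots,a_{i-1},\,s,\,a_{j+1},\dots,a_\ell)$ for some $1 \le k \le i$ and $j \le \ell \le N$, with the convention that the list $(a_k,\dots,a_{i-1})$ is empty when $k=i$ and $(a_{j+1},\dots,a_\ell)$ is empty when $\ell=j$; then
\[
\sigma(C)=(a_k+\dots+a_{i-1})+(a_i+\dots+a_j)+(a_{j+1}+\dots+a_\ell)=a_k+\dots+a_\ell=\sigma\big((a_k,\dots,a_\ell)\big),
\]
which is a block sum of $A$. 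Hence $\hat{\overline{A}}\subseteq\hat{A}$, completing the induction and the proof.

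The only delicate point is the bookkeeping in the last step: one must check that the degenerate blocks of $\overline{A}$ — namely the one equal to $(s)$ alone, and those that begin exactly at $s$ or end exactly at $s$ — are all covered by the displayed identity. Once the conventions on empty index ranges are fixed, this presents no real obstacle, so I expect the argument above to go through essentially as written.
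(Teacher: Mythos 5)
Your proposal is correct and follows essentially the same route as the paper: handle blocks directly, verify that an elementary contraction does not create new block sums, and then iterate (you make the iteration a formal induction and spell out the bookkeeping for blocks straddling the contracted sum, which the paper simply asserts).
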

\begin{proof}{}
The stated inclusion clearly holds if $B$ is a block in $A$, since any block sum of $B$ is a block sum of $A$. If $B$ is an elementary contraction of $A$ then again, any block sum of $B$ is a block sum of $A$. Therefore, the same holds if $B$ is obtained from $A$ by successive elementary contractions.
\end{proof}

\subsection{The discrete derivative}
For subsets $X,Y$ of a group $(G,+)$, their \emph{sumset} is $X+Y=\{x+y \mid x \in X, y \in Y\}$. Thus, $X$ is sumfree if and only if $(X+X) \cap X = \emptyset$; equivalently, if and only $(X-X) \cap X = \emptyset$, where $-X=\{-x \mid x \in X\}$.

In this section, for $X \subset \Z$ finite, we relate $X-X$ with a subset of the form $\hat{A}$ for a certain sequence $A$ closely linked to $X$. This is done with a variant of the discrete derivative, associating to a subset $X \subset \Z$ its sequence of successive jumps. See also~\cite{ABERS}.

\begin{definition} Let $X \subset \Z$ be a finite subset. Let the elements of $X$ be $x_0 < x_1 < \dots < x_r$. The \emph{discrete derivative} of $X$ is the sequence
\[
\Delta X \ = \ (x_1-x_0,\  x_2-x_1, \dots, \ x_r-x_{r-1})
\]
of successive jumps in $X$. 
\end{definition}

The interesting point for our purposes here is that $X-X$ can be read off from the block sums of $\Delta X$.

\begin{proposition}\label{X-X} Let  $X \subset \Z$ be a nonempty finite subset, and let $A=\Delta X$. Then
$$
\hat{A} = (X-X) \cap \N_+.
$$
\end{proposition}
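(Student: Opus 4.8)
The plan is to prove the two inclusions $\hat{A} \subseteq (X-X)\cap\N_+$ and $(X-X)\cap\N_+ \subseteq \hat{A}$ separately, exploiting the fact that block sums of $\Delta X$ telescope. Write $X = \{x_0 < x_1 < \dots < x_r\}$, so that $A = \Delta X = (x_1-x_0,\dots,x_r-x_{r-1})$ with $a_i = x_i - x_{i-1}$ for $1 \le i \le r$.

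For the forward inclusion, take an arbitrary block sum $\sigma(B)$ of $A$, say $B = (a_k,\dots,a_\ell)$ with $1 \le k \le \ell \le r$. Then
\[
\sigma(B) \;=\; a_k + a_{k+1} + \dots + a_\ell \;=\; (x_k - x_{k-1}) + (x_{k+1}-x_k) + \dots + (x_\ell - x_{\ell-1}) \;=\; x_\ell - x_{k-1},
\]
a telescoping sum. Since $k-1 < \ell$, both $x_\ell$ and $x_{k-1}$ lie in $X$ and $x_\ell - x_{k-1} > 0$, so $\sigma(B) \in (X-X)\cap\N_+$. This gives $\hat{A} \subseteq (X-X)\cap\N_+$.

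For the reverse inclusion, let $d \in (X-X)\cap\N_+$, so $d = x_j - x_i$ for some indices with $x_j > x_i$, hence $i < j$. Running the telescoping identity backwards, $x_j - x_i = a_{i+1} + a_{i+2} + \dots + a_j = \sigma(B)$ where $B = (a_{i+1},\dots,a_j)$ is a block in $A$ (here $1 \le i+1 \le j \le r$). Thus $d \in \hat{A}$, giving $(X-X)\cap\N_+ \subseteq \hat{A}$ and completing the proof.

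There is no real obstacle here; the only point requiring a little care is the bookkeeping of indices — a block $(a_k,\dots,a_\ell)$ of $\Delta X$ corresponds to the difference $x_\ell - x_{k-1}$, with the index shift by one — and checking that the correspondence between pairs $i < j$ of indices into $X$ and blocks of $A$ is exactly right, including the boundary cases $i = 0$ and $j = r$. The nonemptiness of $X$ guarantees $A$ and the statement make sense; note that if $|X| = 1$ then $\Delta X$ is the empty sequence, $\hat{A} = \emptyset$, and indeed $(X-X)\cap\N_+ = \emptyset$, so the identity still holds in that degenerate case.
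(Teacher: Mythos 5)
Your proof is correct and follows essentially the same route as the paper: the telescoping identity $a_{s+1}+\dots+a_t = x_t - x_s$, giving the exact correspondence between blocks of $\Delta X$ and pairs of elements of $X$. If anything, you are slightly more explicit than the paper in spelling out both inclusions and the degenerate case $|X|=1$, which is fine.
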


\begin{proof} Denote by $x_0 < x_1 < \dots < x_r$ the elements of $X$. Then 
$$
(X-X) \cap \N_+ \ = \ \{x_t-x_s \mid 0 \le s < t \le r\}.
$$
Let $A = \Delta X = (a_1,\dots,a_r)$, where $a_i = x_i-x_{i-1}$ for $1 \le i \le r$. For any indices $0 \le s < t \le r$, let $B=(a_{s+1},\dots,a_t)$ be the corresponding block in $A$. Then
\begin{equation}\label{sigma B}
x_t-x_s=\sigma(B).
\end{equation}
Indeed, $\sigma(B)=\sum_{i=s+1}^t a_i= \sum_{i=s+1}^t (x_i-x_{i-1})=x_t-x_s$.
Hence $x_t-x_s \in \hat{A}$. This concludes the proof of the proposition.
\end{proof}

The next proposition bounds the Schur degree of certain subsets $\hat{A}$ in $\Z$. We start with a lemma.

\begin{lemma}\label{delta sumfree} Let $X$ be a sumfree subset of $[1, N]$ for some $N\in \N_+$. Let $A=\Delta(X)$. Then $\hat{A} \subseteq [1,N-1] \setminus X$.
\end{lemma}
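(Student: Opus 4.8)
The plan is to combine Proposition~\ref{X-X}, which identifies $\hat{A}$ with $(X-X)\cap\N_+$ when $A=\Delta X$, with the two obvious constraints on differences of a sumfree set sitting inside $[1,N]$. First I would fix the elements of $X$ as $x_0<x_1<\dots<x_r$, all lying in $[1,N]$, and let $A=\Delta X=(a_1,\dots,a_r)$ with $a_i=x_i-x_{i-1}$. By Proposition~\ref{X-X} we have $\hat{A}=(X-X)\cap\N_+=\{x_t-x_s\mid 0\le s<t\le r\}$, so it suffices to show that every such positive difference $d=x_t-x_s$ lies in $[1,N-1]$ and is not in $X$.

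The bound $d\le N-1$ is immediate: since $x_s\ge 1$ (as $X\subseteq[1,N]$, in fact $x_s\ge 1$) and $x_t\le N$, we get $d=x_t-x_s\le N-1$; and $d\ge 1$ since $s<t$. So $d\in[1,N-1]$. For the second part, $d\notin X$, I would argue by contradiction: if $d=x_t-x_s\in X$, then $x_t=x_s+d$ expresses an element of $X$ as a sum of two elements of $X$ (namely $x_s$ and $d$), contradicting the hypothesis that $X$ is sumfree. This uses the definition of sumfree directly — there exist $x,y,z\in X$ with $x+y=z$, here $x=x_s$, $y=d$, $z=x_t$ — and nothing more.

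Hence every element of $\hat{A}$ lies in $[1,N-1]\setminus X$, which is the claimed inclusion. The argument is essentially a two-line verification once Proposition~\ref{X-X} is invoked, so there is no real obstacle; the only point requiring a moment's care is making sure that the difference $d$ itself — not merely $x_t$ — is being used as one of the two summands, i.e. that we are reading the sumfree condition in the form ``$x_s+d=x_t$ with all three terms in $X$'' rather than confusing it with a statement about $X-X$ being disjoint from $X$. (Indeed, the latter is exactly the reformulation $\hat{A}\cap X=\emptyset$ that the lemma is extracting.) I would therefore present it cleanly as: apply Proposition~\ref{X-X}, take an arbitrary $d=x_t-x_s\in\hat A$, observe $1\le d\le N-1$, and note that $d\in X$ would give the forbidden configuration $x_s+d=x_t$ in $X$.
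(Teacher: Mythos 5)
Your proof is correct and follows essentially the same route as the paper: every element of $\hat{A}$ is a difference $x_t-x_s$ with $s<t$ (you cite Proposition~\ref{X-X}, the paper redoes the telescoping inline), hence lies in $[1,N-1]$, and cannot lie in $X$ since $x_s+(x_t-x_s)=x_t$ would violate sumfreeness. No gaps.
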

\begin{proof} Denote $X=\{x_0,\dots,x_n\}$ with $1 \le x_0 < x_1 < \dots < x_n \le N$. Then $A=(a_1,\dots,a_n)$ where $a_i=x_i-x_{i-1}$ for all $1 \le i \le n$. Let $s \in \hat{A}$. Then 
$$s=a_i+\dots+a_j =x_j-x_{i-1}$$
for some $1 \le i \le j \le n$. Therefore $1 \le s \le N-1$, and $s \notin X$ since $s+x_{i-1}=x_j$ and $X$ is sumfree. That is, $s \in [1,N-1] \setminus X$, as desired.
\end{proof}

\begin{proposition}\label{prop sdeg} Let $N \ge 1$, and let $X_1 \sqcup \dots \sqcup X_n$ be a sumfree partition of $[1,N]$. Let $A_i=\Delta(X_i)$ for all $i$. Then $\sd(\widehat{A_i}) \le n-1$. 
\end{proposition}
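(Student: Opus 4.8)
The plan is to reduce the statement to Lemma~\ref{delta sumfree} together with the trivial observation that any subset of a sumfree set is sumfree. Fix an index $i \in \{1,\dots,n\}$. Since $X_i$ is a sumfree subset of $[1,N]$ and $A_i = \Delta(X_i)$, Lemma~\ref{delta sumfree} applies verbatim and yields
\[
\widehat{A_i} \ \subseteq \ [1,N-1] \setminus X_i .
\]
(If $X_i$ is empty or a singleton, then $A_i$ is the empty sequence and $\widehat{A_i} = \emptyset$, so the containment is trivially true.)

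Next I would rewrite the right-hand side using the partition hypothesis. Because $[1,N] = X_1 \sqcup \dots \sqcup X_n$, we have $[1,N] \setminus X_i = \bigsqcup_{j \ne i} X_j$; intersecting with $[1,N-1] \subseteq [1,N]$ gives
\[
[1,N-1] \setminus X_i \ = \ \bigsqcup_{j \ne i} \bigl(X_j \cap [1,N-1]\bigr).
\]
Each set $X_j \cap [1,N-1]$ is a subset of the sumfree set $X_j$, hence is itself sumfree. Combining the two displays, $\widehat{A_i}$ is covered by the $n-1$ sumfree sets $X_j \cap [1,N-1]$, $j \ne i$, and therefore $\sd(\widehat{A_i}) \le n-1$, as claimed.

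There is no real obstacle here beyond having Lemma~\ref{delta sumfree} at hand; the argument is essentially a two-line consequence of it. The only points deserving a word of care are the truncation of the ambient interval from $[1,N]$ to $[1,N-1]$ — needed so that the pieces of the covering match the containment produced by the lemma — and the degenerate case $n=1$: there $[1,N]$ sumfree forces $N=1$, so $A_1$ is the empty sequence and $\widehat{A_1}=\emptyset$, in which case one should either assume $n \ge 2$ or adopt the convention that the empty set requires no sumfree part in its cover.
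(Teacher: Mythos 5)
Your argument is correct and is essentially the paper's own proof: apply Lemma~\ref{delta sumfree} to get $\widehat{A_i} \subseteq [1,N-1]\setminus X_i$, then cover this set by the remaining $n-1$ sumfree parts $X_j$, $j \ne i$, of the partition. The extra remarks about truncating to $[1,N-1]$ and the degenerate cases are fine but not needed beyond what the paper already states.
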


\begin{proof} 
Let $i \in [1, n]$. It follows from Lemma~\ref{delta sumfree} that $\widehat{A_i}$ is contained in 
$$
X_1 \sqcup \dots \sqcup X_{i-1} \sqcup X_{i+1} \sqcup \dots \sqcup X_n.
$$
This induces a partition of $\widehat{A_i}$ into at most $n-1$ sumfree parts.
\end{proof}

\section{Basic properties of the Schur degree}\label{sec schur degree}

In this section, we compute the Schur degree in a few examples after giving its first basic properties. Let us start with the monotonicity of the Schur degree with respect to set inclusion.

\begin{lemma}\label{lem sdeg monotone} Let $G$ be an abelian group. If $X \subseteq Y\subseteq G$ then $\sd(X) \le \sd(Y)$.
\end{lemma}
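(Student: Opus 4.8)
The plan is to argue directly from the definition of Schur degree, using the single elementary observation that any subset of a sumfree set is again sumfree (if $Z \subseteq W$ and $W$ contains no solution of $x+y=z$, then neither does $Z$). There is no real machinery needed here; the statement is a routine consequence of the fact that a cover of $Y$ by sumfree sets restricts to a cover of $X$ by sumfree sets.

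First I would dispose of the trivial case: if $\sd(Y) = \infty$ there is nothing to prove, so assume $\sd(Y) = n < \infty$. By definition there exist sumfree subsets $Y_1, \dots, Y_n \subseteq G$ with $Y \subseteq Y_1 \cup \dots \cup Y_n$. Next I would set $X_i = X \cap Y_i$ for $1 \le i \le n$. Since $X \subseteq Y$, we get $X = X \cap (Y_1 \cup \dots \cup Y_n) = X_1 \cup \dots \cup X_n$, so the $X_i$ cover $X$. Moreover each $X_i \subseteq Y_i$ is a subset of a sumfree set, hence sumfree by the observation above.

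Therefore $X$ is covered by $n$ sumfree subsets, which by definition of the Schur degree gives $\sd(X) \le n = \sd(Y)$, completing the proof. The only point worth stating explicitly is the closure of sumfreeness under passing to subsets, which is immediate; beyond that, there is no genuine obstacle — the lemma is included mainly because this monotonicity is used repeatedly in the sequel.
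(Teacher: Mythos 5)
Your argument is correct and is essentially the paper's own proof: both handle the infinite case trivially and then intersect a sumfree cover (partition) of $Y$ with $X$ to obtain a sumfree cover of $X$ by at most $n$ parts, using that subsets of sumfree sets are sumfree. No issues.
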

\begin{proof} Let $n=\sd(Y)$. If $n= \infty$, we are done. Otherwise, $Y$ admits a partition into $n$ sumfree parts, inducing a partition of $X$ into at most $n$ sumfree parts.
\end{proof}
Here is a useful consequence.
\begin{proposition} Let $A$ be a finite sequence in the abelian group $G$. If $B$ is a minor of $A$, then $\sd(\hat{B}) \le \sd(\hat{A})$.
\end{proposition}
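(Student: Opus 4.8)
The statement is a direct corollary of the two results already established, so the plan is simply to chain them together. First I would invoke Proposition~\ref{prop minor}: since $B$ is a minor of $A$, we have the set inclusion $\hat{B} \subseteq \hat{A}$ inside $G$. Then I would apply Lemma~\ref{lem sdeg monotone}, monotonicity of the Schur degree with respect to inclusion, to the pair $\hat{B} \subseteq \hat{A}$, which yields $\sd(\hat{B}) \le \sd(\hat{A})$ at once.

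There is essentially no obstacle here: no case analysis on whether $B$ is a block or a contraction is needed, because Proposition~\ref{prop minor} already handles both cases uniformly, and Lemma~\ref{lem sdeg monotone} already covers the possibility $\sd(\hat{A}) = \infty$. The only thing worth a half-sentence of care is that the inequality is vacuously true when $\sd(\hat{A}) = \infty$, and otherwise follows from a genuine covering of $\hat{A}$ by $\sd(\hat{A})$ sumfree sets restricting to a covering of $\hat{B}$; but this is exactly the content of Lemma~\ref{lem sdeg monotone}, so nothing new must be argued.

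If one wished to make the proof self-contained rather than citing Lemma~\ref{lem sdeg monotone}, the alternative would be to spell out the one-line argument directly: take $n = \sd(\hat{A})$ (done if $n=\infty$), write $\hat{A} = Y_1 \sqcup \dots \sqcup Y_n$ with each $Y_j$ sumfree, and observe that $\hat{B} = (\hat{B}\cap Y_1) \sqcup \dots \sqcup (\hat{B}\cap Y_n)$ is a covering of $\hat{B}$ by $n$ sumfree sets, whence $\sd(\hat{B}) \le n$. But invoking the previously stated lemma is cleaner, and that is the route I would take.
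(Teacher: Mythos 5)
Your proposal is correct and coincides with the paper's own proof: it cites Proposition~\ref{prop minor} to get $\hat{B} \subseteq \hat{A}$ and then applies Lemma~\ref{lem sdeg monotone}. Nothing further is needed.
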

\begin{proof} 
We have $\hat{B} \subseteq \hat{A}$ by Proposition~\ref{prop minor}. Now apply Lemma~\ref{lem sdeg monotone}. 
\end{proof}
Note also that if $A'$ denotes the reverse sequence of $A$, then $\sd(\hat{A})=\sd(\widehat{A'})$. Indeed, $A$ and $A'$ have identical block sums, i.e. $\hat{A}=\widehat{A'}$. 

\smallskip
Our next proposition shows that the Schur degree is also monotone with respect to inverse images under group morphisms. We start with a lemma.

\begin{lemma}\label{lemma sumfree} Let $G_1,G_2$ be abelian groups and let $f \colon G_1 \to G_2$ be a morphism. Let $Y \subseteq G_2$. If $Y$ is sumfree then $f^{-1}(Y)$ also is.
\end{lemma}
\begin{proof} Assume that $f^{-1}(Y)$ is not sumfree. Then there exist $x_1,x_2,x_3 \in f^{-1}(Y)$ such that $x_1+x_2-x_3=0$. Hence $f(x_1)+f(x_2)-f(x_3)=0$, implying that $Y$ is not sumfree either.
\end{proof}

\begin{proposition}\label{map} Let $G_1,G_2$ be abelian groups and let $f \colon G_1 \to G_2$ be a morphism. Let $Y \subseteq G_2$. Then $\sd(f^{-1}(Y)) \le \sd(Y)$.
\end{proposition}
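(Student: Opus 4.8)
The plan is to mimic the proof of Lemma~\ref{lem sdeg monotone}, but transport a sumfree partition of $Y$ backwards through $f$ rather than restricting it along an inclusion. First I would dispose of the trivial case: if $\sd(Y)=\infty$ there is nothing to prove, so assume $n=\sd(Y)<\infty$ and fix a partition $Y = Y_1 \sqcup \dots \sqcup Y_n$ into sumfree subsets. Pulling back, write $f^{-1}(Y) = f^{-1}(Y_1) \cup \dots \cup f^{-1}(Y_n)$; since the $Y_i$ are pairwise disjoint, so are the $f^{-1}(Y_i)$, and together they cover $f^{-1}(Y)$ because every element of $f^{-1}(Y)$ maps into exactly one $Y_i$.

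The key step is then to invoke Lemma~\ref{lemma sumfree}: each $Y_i$ is sumfree, hence each $f^{-1}(Y_i)$ is sumfree. This exhibits $f^{-1}(Y)$ as a disjoint union of $n$ sumfree sets, so by the definition of Schur degree $\sd(f^{-1}(Y)) \le n = \sd(Y)$, as claimed. (One should note that some of the $f^{-1}(Y_i)$ may be empty, which only helps — it means $f^{-1}(Y)$ is covered by even fewer than $n$ sumfree parts — but in any case the bound $\le n$ holds.)

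I do not anticipate a genuine obstacle here; the statement is essentially a formal consequence of Lemma~\ref{lemma sumfree} combined with the observation that taking preimages commutes with unions. The only point requiring a moment's care is making sure we are covering \emph{all} of $f^{-1}(Y)$, not just a subset: this is immediate since $x \in f^{-1}(Y)$ forces $f(x) \in Y = \bigcup_i Y_i$, hence $x \in \bigcup_i f^{-1}(Y_i)$. So the proof is short: reduce to the finite case, pull back the partition, apply Lemma~\ref{lemma sumfree} termwise, and read off the bound from the definition.
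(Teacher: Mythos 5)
Your proof is correct and follows essentially the same route as the paper: pull back a minimal sumfree partition of $Y$ through $f$, observe the preimages are disjoint, cover $f^{-1}(Y)$, and remain sumfree by Lemma~\ref{lemma sumfree}. The only difference is that you explicitly dispose of the case $\sd(Y)=\infty$ and note that empty preimages are harmless, details the paper leaves implicit.
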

\begin{proof} 
Let $n=\sd(Y)$. Then there exist sumfree subsets $Y_1, \dots, Y_n \subseteq Y$ such that 
$$Y=Y_1 \sqcup\dots\sqcup Y_n.$$  
Therefore $f^{-1}(Y)=f^{-1}(Y_1)\sqcup\dots\sqcup f^{-1}(Y_n)$, and $f^{-1}(Y_i)$ is sumfree for all $i$ by Lemma \ref{lemma sumfree}. Hence $\sd(f^{-1}(Y))\le n$.
\end{proof}

\subsection{Examples}
As an illustration, we determine the Schur degree of a few selected subsets of $\Z$. In some cases, the results were obtained using specially written functions in \emph{Mathematica 10}~\cite{W}.

\begin{example} Let $B=[1,2] \cup [m,m+4]$. We claim that 
$$
\sd(B) = 3
$$
for all $m \ge 3$. Indeed, let $A=(1,1,m,1,1)$. Then $\hat{A}=B$, and $\sd(\hat{A}) \ge 3$ by Corollary~\ref{length 5} in the next section. Equality is obvious here.
\end{example}

\begin{example}\label{powers of 2}
Let $A = (2^i)_{0 \le i \le 13}$. 
Then here also, $\sd(\hat{A})=3$. But with one more term, i.e. for $B = (2^i)_{0 \le i \le 14}$, it is no longer the case as $\sd(\hat{B})=4$.
\end{example}

\begin{example}
This example is an application of Proposition~\ref{map}. Let $x,y$ be positive integers, and let $A=(x,y,\dots,x,y)$ be the $2$-periodic sequence of length $14$. Then $\sd(\hat{A} \setminus \{7x+7y\})=3$. Indeed, here are three sumfree classes covering that set:
\begin{align*}
C_1: &\,\,\, x,\, y,\, 2x+2y,\, 5x+5y,\, 7x+6y,\, 6x+7y. \\
C_2: &\,\,\, x+y,\, 2x+y,\, x+2y,\, 6x+5y,\, 5x+6y,\, 6x+6y.  \\
C_3: &\,\,\, 3x+2y,\, 2x+3y,\, 3x+3y,\, 4x+3y,\, 3x+4y,\, 4x+4y,\, 5x+4y,\, 4x+5y. 
\end{align*}

Mapping $x,y$ to $1$ yields a sumfree $3$-partition of $[1,13]$. In fact, the partition $C_1,C_2,C_3$ was constructed to do exactly that, using Proposition~\ref{map}.
\end{example}

\begin{example}\label{ex [1,6]} 
For each integer $x \ge 8$, one has
$$
\sd([1,6] \cup [x,x+13]) = 3.
$$
Indeed, this is shown by the following sumfree $3$-partition of this set:
\begin{align*}
C_1: &\,\,\,\, 1,6,x,x+3,x+7,x+10. \\
C_2: &\,\,\,\, 2,5,x+1,x+2,x+8,x+9.  \\
C_3: &\,\,\,\, 3,4,x+4,x+5,x+6,x+11,x+12,x+13. 
\end{align*}
However, adjoining $7$ to it, one has $\sd([1,7] \cup [x,x+13]) = 4$.
\end{example}

\begin{example}
Let $G$ be an abelian group containing $\Z$ and let $x \in G \setminus \Z$. Then
\begin{eqnarray*}
\sd(\{1,2\} \cup [x,x+3]) & = & 2, \\
\sd(\{1,2\} \cup [x,x+4]) & = & 3.
\end{eqnarray*}
Indeed, as easily seen, the \emph{only} sumfree $2$-coloring of $\{1,2\} \cup [x,x+3]$ is given by the two color classes $\{1,x,x+3\}$ and $\{2,x+1,x+2\}$. Hence, it is impossible to add $x+4$ to either class while maintaining the sumfree property.
\end{example}

\begin{example}
Let $G$ be an abelian group containing $\Z$. Let $x \in G \setminus \Z$ be such that $\{1,x\}$ is $\Z$-free, i.e. spans a free-abelian subgroup of rank $2$ of $G$. Then
$$
\sd([1,6] \cup (x+\N)) = 3.
$$
Indeed, consider the $3$-partition of Example~\ref{ex [1,6]} and extend it periodically as follows:
\begin{align*}
C_1: &\,\,\,\, 1,6,x,x+3,x+7,x+10, x+14,x+17,\dots \\
C_2: &\,\,\,\, 2,5,x+1,x+2,x+8,x+9,x+15,x+16,\dots  \\
C_3: &\,\,\,\, 3,4,x+4,x+5,x+6,x+11,x+12,x+13,x+18,x+19,x+20,\dots 
\end{align*}
One can also extend it towards the left. Thus in fact, $\sd([1,6] \cup (x+\Z)) = 3$. But here again, adjoining $7$ to it, one has $\sd([1,7] \cup (x+\Z))=4$.
\end{example}

\section{Comparison with $R_n(3)$}\label{sec comparison}

Recall that, for $n \ge 1$, the Ramsey number $R_n(3)$ denotes the smallest $N$ such that, for any $n$-coloring of the edges of the complete graph $K_N$, there is a monochromatic triangle. There is a well known relationship between the Schur and the  Ramsey numbers, namely
\begin{equation}\label{bound on S}
S(n) \le R_n(3)-2.
\end{equation}
Using the Schur degree of $[1,N]$, this may be expressed as follows:
$$
N \ge R_{n}(3)-1 \, \Longrightarrow \, \sd([1,N]) \ge n+1.
$$
Theorem~\ref{main thm} below extends this relationship to the Schur degree of $\hat{A}$ for any finite sequence $A$ in an abelian group. 

\begin{theorem}\label{main thm} Let $G$ be an abelian group. Let $A$ be a finite sequence in $G$. If $|A|\ge R_n(3)-1$ then $\sd(\hat{A}) \ge n+1$.
\end{theorem}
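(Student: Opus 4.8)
The plan is to mimic the classical argument behind $S(n) \le R_n(3)-2$ recalled above, replacing the intervals $[1,N]$ by the block-sum set $\hat{A}$ and the usual ``difference colouring'' by a colouring built from the partial sums of $A$. Concretely, I would prove the contrapositive: \emph{if $\sd(\hat{A}) \le n$, then $|A| \le R_n(3)-2$.} So suppose $A=(a_1,\dots,a_N)$ with $N=|A|$ and that $\hat{A}$ is covered by $n$ sumfree subsets $Y_1,\dots,Y_n$ of $G$ (we may assume exactly $n$ of them, some possibly empty). Introduce the partial sums $s_0=0$ and $s_i=a_1+\dots+a_i$ for $1\le i\le N$. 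The elementary observation is that, for $0 \le u < t \le N$, the block sum of $(a_{u+1},\dots,a_t)$ equals $s_t-s_u$; in particular $s_t-s_u \in \hat{A}$ for all such pairs $u<t$.

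Next I would define an $n$-colouring of the edges of the complete graph $K_{N+1}$ on the vertex set $\{0,1,\dots,N\}$: colour the edge $\{u,t\}$ with $u<t$ by some index $k$ such that $s_t-s_u \in Y_k$ (one exists since $\hat{A}=Y_1\cup\dots\cup Y_n$). I then claim this colouring has no monochromatic triangle. Indeed, a monochromatic triangle on vertices $u<t<w$ would put $s_t-s_u$, $s_w-s_t$ and $s_w-s_u$ all in a single $Y_k$; but $(s_t-s_u)+(s_w-s_t)=s_w-s_u$, contradicting the sumfreeness of $Y_k$. Since $K_{N+1}$ thus admits an $n$-colouring with no monochromatic triangle, by monotonicity of the Ramsey property in the number of vertices we get $N+1 < R_n(3)$, that is $N \le R_n(3)-2$, which is what we want.

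The argument is essentially routine, and the only point that deserves care is the passage to a general abelian group, where a priori the partial sums $s_0,\dots,s_N$ need not be pairwise distinct. I expect this to be the main (mild) obstacle to state cleanly, but it is harmless: the graph $K_{N+1}$ is built on the \emph{indices} $0,\dots,N$, hence is genuinely complete on $N+1$ vertices regardless of coincidences among the $s_i$; and the triangle computation uses only the additive identity among block sums, not any separation of the $s_i$ nor distinctness of the three differences involved (recall that sumfree forbids $x+y=z$ with $x,y,z$ in the set even when some of them coincide). One could also note, as a sanity check, that whenever $\hat{A}$ is a finite union of sumfree sets one automatically has $0\notin\hat{A}$, which forces $s_t\ne s_u$ for $u<t$ anyway; but this is not needed for the proof.
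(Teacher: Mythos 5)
Your proposal is correct and is essentially the paper's own argument in contrapositive form: both identify block sums of $A$ with edges of $K_{N+1}$ via the indices (equivalently the partial sums $s_i$), and both exploit the identity $(s_t-s_u)+(s_w-s_t)=s_w-s_u$ to match monochromatic triangles with sumfree-violating triples. Your extra remarks on possible coincidences among the $s_i$ are sound but, as you note, not needed, since the colouring lives on the index set.
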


\begin{proof} Let $N=|A| \ge R_n(3)-1$. Denote $b(i,j)=x_i+ \dots +x_{j-1}$ for all $1 \le i < j \le N+1$. Then 
$$\hat{A}=\{b(i,j) \mid 1 \le i < j \le N+1\}.$$
Let $\chi \colon \hat{A} \to [1,n]$ be an arbitrary $n$-coloring of $\hat{A}$. Consider the complete graph $K_{N+1}=(V,E)$ on the vertex set $V=[1, N+1]$. Then $\chi$ induces an $n$-coloring $\chi' \colon E \to [1,n]$ on $E$ 
defined by 
$$\chi'(\{i,j\}) = \chi(b(i,j))$$ 
for all $1 \le i < j \le N+1$. Since $N+1 \ge R_n(3)$, there is a monochromatic triangle under $\chi'$ in $K_{N+1}$, say with vertices $i, j, h$ for some $1 \le i < j  <h \le N+1$. This yields, under $\chi$, the monochromatic subset 
$$\{b(i,j), b(j,h), b(i,h)\} \subset \hat{A}.$$
Since $b(i,j)+b(j,h)=b(i,h)$, the corresponding color class in $\hat{A}$ is not sumfree. Since $\chi$ was an arbitrary $n$-coloring of $\hat{A}$, we conclude that $\sd(\hat{A}) \ge n+1$.
\end{proof}

In particular, for $n=2$, $3$ and $4$, one has the following consequences.

\begin{corollary}\label{length 5} Let $A$ be a sequence in an abelian group $G$. If $|A|\ge 5$, then $\sd(\hat{A}) \ge 3$. If $|A|\ge 16$, then $\sd(\hat{A}) \ge 4$. If $|A|\ge 61$, then $\sd(\hat{A}) \ge 5$.
\end{corollary}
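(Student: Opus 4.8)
The statement to prove is Corollary~\ref{length 5}, which asks for the three numerical consequences of Theorem~\ref{main thm} obtained by specializing $n=2,3,4$.

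\medskip

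The plan is to simply invoke Theorem~\ref{main thm} three times, once for each value of $n \in \{2,3,4\}$, after substituting the known values of the small Ramsey numbers $R_n(3)$ recorded in~\eqref{ramsey numbers} and in the surrounding discussion. First I would recall that $R_2(3)=6$, so that a sequence $A$ with $|A| \ge 5 = R_2(3)-1$ satisfies the hypothesis of Theorem~\ref{main thm} with $n=2$, yielding $\sd(\hat{A}) \ge 3$. Next, $R_3(3)=17$, so $|A| \ge 16 = R_3(3)-1$ gives $\sd(\hat{A}) \ge 4$ by the same theorem with $n=3$. Finally, for $n=4$ one only has the lower bound $R_4(3) \ge 51$; however, what is actually needed is merely \emph{some} valid value of $N$ for which the implication holds, and since $R_4(3) \le 62$, taking $|A| \ge 61 = 62-1 \ge R_4(3)-1$ still forces $|A| \ge R_4(3)-1$, hence $\sd(\hat{A}) \ge 5$ by Theorem~\ref{main thm} with $n=4$. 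The only mild subtlety worth a sentence in the writeup is this last point: because Theorem~\ref{main thm} says "$|A| \ge R_n(3)-1 \Rightarrow \sd(\hat{A}) \ge n+1$", any threshold at least as large as $R_n(3)-1$ works a fortiori, so the upper bound $R_4(3) \le 62$ suffices even though the exact value of $R_4(3)$ is unknown.

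\medskip

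There is essentially no obstacle here; this is a pure specialization. The proof is a single short paragraph: "Apply Theorem~\ref{main thm} with $n=2,3,4$, using $R_2(3)=6$, $R_3(3)=17$ and $R_4(3) \le 62$." I would not belabor the argument beyond that.
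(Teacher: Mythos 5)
Your proof is correct and matches the paper's own argument exactly: both apply Theorem~\ref{main thm} with $n=2,3,4$ using $R_2(3)=6$, $R_3(3)=17$ and the current bound $R_4(3)\le 62$, so that $|A|\ge 61 \ge R_4(3)-1$. The observation that only an upper bound on $R_4(3)$ is needed is precisely the point the paper relies on as well.
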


\begin{proof}
Follows from Theorem~\ref{main thm} and the well-known values $R_2(3) = 6$, $R_3(3) = 17$ and current upper bound $R_4(3) \le 62$.
\end{proof}

The converse of Theorem~\ref{main thm} does not hold in general. For instance, for $n=3$ and $A=(1,\dots,1)$ of length $14$ in $\Z$, by \eqref{eq schur} we have $\sd(\hat{A}) \ge 4$ since $\hat{A}=[1,14]$ and $S(3)=13$, yet $|A| \le R_3(3)-2=15$. However, here is a partial converse showing that Theorem~\ref{main thm} is best possible. First observe that if $|A|=N$, then 
$$|\hat{A}| \le 1+2+\cdots+N=\binom{N+1}2.$$
The case of equality, where all block sums in $A$ are pairwise distinct, is of interest. It occurs for instance if $A$ is \emph{$\Z$-free}, i.e. generates a subgroup isomorphic to $\Z^N$.

\begin{theorem}\label{thm converse} Let $A$ be a finite sequence in an abelian group $G$. If $|A| \le R_n(3)-2$ and $A$ is $\Z$-free, then $\sd(\hat{A}) \le n$.
\end{theorem}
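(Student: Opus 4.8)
The plan is to reduce Theorem~\ref{thm converse} to the Ramsey-theoretic fact that $K_{N+1}$, with $N = |A| \le R_n(3)-2$, so $N+1 \le R_n(3)-1 < R_n(3)$, admits an $n$-coloring of its edges with no monochromatic triangle. Fix such a triangle-free edge coloring $\psi \colon E(K_{N+1}) \to [1,n]$ on the vertex set $V = [1,N+1]$. Writing $A = (x_1,\dots,x_N)$ and $b(i,j) = x_i + \dots + x_{j-1}$ for $1 \le i < j \le N+1$ exactly as in the proof of Theorem~\ref{main thm}, so that $\hat{A} = \{b(i,j) \mid 1 \le i < j \le N+1\}$, I want to transport $\psi$ back to a coloring of $\hat{A}$. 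Here is precisely where the hypothesis that $A$ is $\Z$-free enters: since all block sums of $A$ are pairwise distinct, the map $(i,j) \mapsto b(i,j)$ is a bijection from the edge set of $K_{N+1}$ onto $\hat{A}$, so we may define $\chi \colon \hat{A} \to [1,n]$ unambiguously by $\chi(b(i,j)) = \psi(\{i,j\})$.

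The key step is then to check that each color class $\chi^{-1}(c)$ is sumfree. Suppose not: then there are block sums $u, v, w \in \hat{A}$ all of color $c$ with $u + v = w$. Write $u = b(i,j)$, $v = b(k,\ell)$, $w = b(p,q)$. The difficulty — and this is the main obstacle — is that the additive relation $b(i,j) + b(k,\ell) = b(p,q)$ need not be the ``obvious'' concatenation relation $b(i,j) + b(j,h) = b(i,h)$; a priori it could be some other coincidence among block sums. But this is exactly what $\Z$-freeness rules out. Indeed, each $b(i,j) = \sum_{t=i}^{j-1} x_t$, and under the isomorphism with $\Z^N$ sending $x_t$ to the $t$-th standard basis vector, $b(i,j)$ becomes the $0/1$ indicator vector of the interval $[i,j-1] \subseteq [1,N]$. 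The equation $u+v = w$ then forces the indicator of $[p,q-1]$ to equal the sum of the indicators of $[i,j-1]$ and $[k,\ell-1]$; since a sum of two $0/1$ vectors is again $0/1$ only when the two intervals are disjoint, and their union must be the single interval $[p,q-1]$, the two intervals must be adjacent. After possibly swapping $u$ and $v$ we get $[i,j-1] \sqcup [k,\ell-1] = [p,q-1]$ with $j = k$, $i = p$, $\ell = q$. Hence $\{i,j\}, \{j,\ell\}, \{i,\ell\}$ form a triangle in $K_{N+1}$, and $\psi$ assigns color $c$ to all three of its edges, contradicting triangle-freeness of $\psi$.

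Therefore every color class is sumfree, $\chi$ is a sumfree $n$-coloring of $\hat{A}$, and $\sd(\hat{A}) \le n$, as claimed. I expect the only subtlety requiring care in the write-up is the interval-arithmetic lemma in the previous paragraph — namely that the set of indicator vectors of subintervals of $[1,N]$ is ``sum-free up to concatenation,'' i.e. the only way one such vector is the sum of two others is the adjacent-interval case — and making sure the endpoint bookkeeping (the shift between $b(i,j)$ and the interval $[i,j-1]$) is handled consistently. Everything else is a direct translation through the bijection furnished by $\Z$-freeness, mirroring the proof of Theorem~\ref{main thm} in reverse.
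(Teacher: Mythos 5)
Your proposal is correct and follows essentially the same route as the paper's proof: pull back a triangle-free $n$-coloring of $K_{N+1}$ through the bijection $\{i,j\}\mapsto b(i,j)$ guaranteed by $\Z$-freeness, and use $\Z$-freeness again to show that any relation $u+v=w$ among block sums must be a concatenation relation $b(i,j)+b(j,h)=b(i,h)$, hence corresponds to a (non-monochromatic) triangle. Your indicator-vector formulation in $\Z^N$ is just a more explicit rendering of the step the paper states directly, so no changes are needed.
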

\begin{proof} Denote $A = \{x_1,\dots,x_N\}$. Reusing the notation introduced in the proof of Theorem~\ref{main thm}, we have
$$\hat{A}=\{b(i,j) \mid 1 \le i < j \le N+1\}.$$
Again, let $K_{N+1}=(V,E)$ be the complete graph on the vertex set $V=[1, N+1]$. Consider the map $f \colon E \to \hat{A}$ 
defined by 
\begin{equation}\label{map f}
f(\{i,j\}) = b(i,j)
\end{equation}
for all $1 \le i < j \le N+1$. Since $|E|=|\hat{A}|$ and the $b(i,j)$ are pairwise distinct by assumption, the map $f$ is a \emph{bijection}. Since $N+1 \le R_n(3)-1$, there is an $n$-coloring $\chi \colon E \to [1,n]$ without any monochromatic triangle. Consider the composed map
$$
\chi \circ f^{-1} \colon \hat{A} \longrightarrow [1,n].
$$
We claim that  under this $n$-coloring of $\hat{A}$, every color class is sumfree. Indeed, let $u_1,u_2,u_3$ be any triple in $\hat{A}$ satisfying $u_1+u_2=u_3$. We claim that it cannot be monochromatic under $\chi \circ f^{-1}$. We have $u_1=b(i_1,j_1)$, $u_2=b(i_2,j_2)$, $u_3=b(i_3,j_3)$ for some indices $i_1 <j_1$, $i_2 <j_2$, $i_3 <j_3$ in $[1,N+1]$. The relation $u_1+u_2=u_3$ then becomes
$$
(x_{i_1}+\cdots+x_{j_1-1})+(x_{i_2}+\cdots+x_{j_2-1})=(x_{i_3}+\cdots+x_{j_3-1}).
$$
We may freely assume $i_1 \le i_2$. Since the sequence $x_1,\dots,x_N$ is $\Z$-free by hypothesis, the above equality is only possible if $i_1=i_3$, $j_1=i_2$ and $j_2=j_3$. That is, if the three edges $\{i_1,j_1\}$, $\{i_2,j_2\}$, $\{i_3,j_3\}$ form a triangle in $K_{N+1}$. Since that triangle is not monochromatic under $\chi$, the triple $u_1,u_2,u_3=u_1+u_2$ in $\hat{A}$ is not monochromatic under $\chi \circ f^{-1}$ either, since $f^{-1}(u_k)=\{i_k,j_k\}$ for $k=1, 2, 3$ by \eqref{map f}. Hence $\sd(\hat{A}) \le n$, as claimed.
\end{proof}

\begin{remark} The hypothesis that $A$ be $\Z$-free is not strictly needed in Theorem~\ref{thm converse}. For instance, let $A=(1,3, 3^2,\dots,3^{N-1})$. Even though $A$ is not $\Z$-free, it is still true that if $N \le R_n(3)-2$ then $\sd(\hat{A}) \le n$. This derives from the above proof and the fact that the only triples $u,v,u+v$ in $\hat{A}$ are those of the form $b(i,j),b(j,h),b(i,h)$. 
\end{remark}

\section{A recursive upper bound on $S(n)$?}\label{sec conjectures}

The Ramsey numbers admit well-known recursive upper bounds, including
\begin{equation}\label{ramsey bound}
R_n(3) \le n(R_{n-1}(3)-1)+2
\end{equation}
\cite[Theorem 6, p. 6]{GG}. To the best of our knowledge, no recursive upper bounds are known yet for the Schur numbers. We propose here a conjecture which, if true, would fill this gap. Let us start with an upper bound on $S(n)$ involving the number $L(n)$ defined below.

\begin{definition} Let $n \ge 2$. We define $L(n)$ to be the smallest positive integer with the following property: for every sequence $A$ in $\N_+$ of length $|A| = L(n)$ and average $\mu(A) \le n$, one has $\sd(\hat{A}) \ge n$. 
\end{definition}

\begin{example}\label{n=2} Let $n=2$. Then $L(2)=2$. Indeed, up to symmetry, the only sequences $A$ to consider are $(1,1)$, $(1,2)$, $(1,3)$, $(2,2)$. This yields $\hat{A}=\{1,2\}$, $\{1,2,3\}$, $\{1,3,4\}$, $\{2,4\}$, respectively. As none is sumfree, we have $\sd(\hat{A}) \ge 2$ in all cases, as required. 
\end{example}

Let us now establish the existence of $L(n)$ in full generality.
\begin{proposition}\label{prop L exists} For all $n\ge 2$, the number $L(n)$ exists and satisfies
\begin{equation}\label{L}
S(n-1)+1 \: \le\: L(n) \: \le\: R_{n-1}(3)-1.
\end{equation}
\end{proposition}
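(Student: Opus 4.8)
The plan is to establish the two inequalities in~\eqref{L} separately, deferring the existence of $L(n)$ to the proof of the upper bound (since exhibiting a finite integer with the defining property of $L(n)$ simultaneously proves existence and gives the bound $L(n) \le R_{n-1}(3)-1$).

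\textbf{Upper bound.} First I would show that $L(n) \le R_{n-1}(3)-1$, i.e. that $N := R_{n-1}(3)-1$ already has the defining property of $L(n)$. So let $A$ be any sequence in $\N_+$ with $|A| = N = R_{n-1}(3)-1$ and $\mu(A) \le n$ (the average hypothesis is in fact not needed for this direction). By Theorem~\ref{main thm} applied with $n$ replaced by $n-1$, the condition $|A| \ge R_{n-1}(3)-1$ immediately gives $\sd(\hat{A}) \ge (n-1)+1 = n$. This is exactly the required property, so $L(n)$ exists and $L(n) \le R_{n-1}(3)-1$.

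\textbf{Lower bound.} Next I would show $L(n) \ge S(n-1)+1$, equivalently that $L := S(n-1)$ does \emph{not} have the defining property of $L(n)$: I must exhibit a sequence $A$ in $\N_+$ of length $|A| = S(n-1)$ with $\mu(A) \le n$ and $\sd(\hat{A}) \le n-1$. The natural candidate is $A = (1,1,\dots,1)$ of length $S(n-1)$, for which $\hat{A} = [1, S(n-1)]$ and $\mu(A) = 1 \le n$. By~\eqref{eq schur}, $\sd([1,S(n-1)]) = n-1 < n$, so this $A$ witnesses the failure of the property at length $S(n-1)$, giving $L(n) \ge S(n-1)+1$.

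\textbf{Assembling and the main obstacle.} Combining the two parts yields $S(n-1)+1 \le L(n) \le R_{n-1}(3)-1$ as claimed; note these bounds are consistent precisely because $S(n-1) \le R_{n-1}(3)-2$ by~\eqref{bound on S}. Honestly, there is no serious obstacle here: both inequalities reduce to a direct application of an already-proved theorem (Theorem~\ref{main thm}) and the defining property~\eqref{eq schur} of the Schur numbers. The only point requiring a moment's care is recognizing that the "smallest positive integer with the property" is well-defined once one finite witness ($R_{n-1}(3)-1$) is produced, and that proving the lower bound means \emph{falsifying} the property at $S(n-1)$ rather than verifying it — a single explicit sequence suffices for that.
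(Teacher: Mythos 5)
Your upper bound argument coincides with the paper's: any sequence of length $R_{n-1}(3)-1$ has $\sd(\hat A)\ge n$ by Theorem~\ref{main thm}, irrespective of its average, so the defining property holds at that length, which simultaneously gives existence of $L(n)$ and $L(n)\le R_{n-1}(3)-1$.

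For the lower bound you take a slightly different route than the paper, and as written it has a small logical gap. You claim that $L(n)\ge S(n-1)+1$ is \emph{equivalent} to the failure of the defining property at the single length $S(n-1)$. Since $L(n)$ is the \emph{smallest} $N$ for which the property holds, that equivalence would require the property to be monotone in the length $N$, which is neither proved nor obvious (a sequence of length $N+1$ with average at most $n$ need not contain a length-$N$ block with average at most $n$; e.g. $(1,4,1)$ for $n=2$). Exhibiting a counterexample only at length $S(n-1)$ shows $L(n)\ne S(n-1)$, but a priori leaves open $L(n)<S(n-1)$. The repair is immediate: your witness $(1,\dots,1)$ works at \emph{every} length $m\le S(n-1)$, since then $\hat A=[1,m]$ and $\sd([1,m])\le\sd([1,S(n-1)])=n-1$ by Lemma~\ref{lem sdeg monotone} and \eqref{eq schur}, so the property fails at all lengths up to $S(n-1)$ and hence $L(n)\ge S(n-1)+1$. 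The paper sidesteps the issue by arguing at length $L(n)$ itself: the all-ones sequence of length $L(n)$ has average $1\le n$, so the defining property gives $\sd([1,L(n)])\ge n$, forcing $L(n)>S(n-1)$ by definition of $S(n-1)$. Either one-line fix completes your proof.
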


\begin{proof} If $A$ is any sequence in $\N_+$ of length $|A| = R_{n-1}(3)-1$, then irrespective of its average $\mu(A)$, we have $\sd(\hat{A}) \ge n$ by Theorem~\ref{main thm}, as desired. Thus $L(n)$ exists and is bounded above by $R_{n-1}(3)-1$. On the other hand, let $A=(1,\dots,1)$ of length $L(n)$ and average $\mu(A)=1$. Then $\hat{A}=[1,L(n)]$, whence $\sd([1,L(n)]) \ge n$ by hypothesis. Hence $L(n)  \ge S(n-1)+1$, by definition of $S(n-1)$.
\end{proof}

Here is our upper bound on $S(n)$ involving $L(n)$.
\begin{theorem}\label{conditional} We have $S(n) \le n L(n)$ for all $n \ge 2$.
\end{theorem}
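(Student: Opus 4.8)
The plan is to prove the contrapositive at the level of intervals: I want to show that if $N = nL(n)$ then $\sd([1,N]) \le n$ is impossible, i.e. $\sd([1,nL(n)]) \ge n+1$, which by \eqref{eq schur} is equivalent to $S(n) \le nL(n)-1 < nL(n)$. Wait — more carefully, the cleanest route is: suppose for contradiction that $[1,N]$ with $N = nL(n)$ admits a sumfree partition $X_1 \sqcup \dots \sqcup X_n$; I will derive a contradiction, which gives $S(n) < nL(n)$, hence $S(n) \le nL(n)$ as claimed (in fact the slightly stronger strict inequality, but the stated form suffices).

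First I would invoke a pigeonhole/averaging argument on the partition $X_1 \sqcup \dots \sqcup X_n$ of $[1,N]$. Since the $X_i$ partition a set of size $N = nL(n)$, some class, say $X_i$, has $|X_i| \ge L(n)$; write $m = |X_i| \ge L(n)$ and let its elements be $x_0 < x_1 < \dots < x_{m-1}$, so $\Delta X_i$ has length $m-1 \ge L(n)-1$ — this is off by one, so I should instead take $|X_i| \ge L(n)+1$, which requires $N \ge n(L(n)+1) = nL(n)+n$; that is too big. The correct bookkeeping: take $A_i = \Delta(X_i)$, which has length $|X_i|-1$. I need $|X_i| - 1 \ge L(n)$ for the definition of $L(n)$ to bite, i.e. $|X_i| \ge L(n)+1$. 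To guarantee this by pigeonhole from $\sum_i |X_i| = N$ I need $N \ge n L(n) + 1$. So the theorem as stated, $S(n) \le nL(n)$, is exactly the boundary case: with $N = nL(n)+1 = S(n)+1$ forced... The clean statement to prove is: if $N \ge nL(n)+1$ then $\sd([1,N]) \ge n+1$, equivalently $S(n) \le nL(n)$. So I assume $N = nL(n)+1$, apply pigeonhole to get some $|X_i| \ge L(n)+1$, hence $|A_i| = |X_i|-1 \ge L(n)$.

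Next I would control the average of $A_i$. The sum of the entries of $A_i = \Delta X_i$ is telescoping: $\sigma(A_i) = x_{m-1} - x_0 \le N-1 < N = nL(n)+1$, while $|A_i| \ge L(n)$, so the average $\mu(A_i) = \sigma(A_i)/|A_i| \le (N-1)/L(n)$. I need this to be $\le n$. Since $N - 1 = nL(n)$, we get $\mu(A_i) \le nL(n)/|A_i| \le nL(n)/L(n) = n$. Good — so $A_i$ is a sequence in $\N_+$ of length $\ge L(n)$ with average $\le n$; truncating to a block of length exactly $L(n)$ (whose average is still $\le n$ — here I must be mildly careful, but any length-$L(n)$ sub-block of $A_i$ obtained by deleting entries from one end still has entry-sum $\le N-1 = nL(n)$, hence average $\le n$) and invoking the defining property of $L(n)$ gives $\sd(\widehat{A_i}) \ge n$. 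But by Lemma~\ref{delta sumfree}, $\widehat{A_i} \subseteq [1,N-1] \setminus X_i$, so it is covered by the $n-1$ remaining classes $X_1,\dots,X_{i-1},X_{i+1},\dots,X_n$, each sumfree; hence $\sd(\widehat{A_i}) \le n-1$. This contradicts $\sd(\widehat{A_i}) \ge n$. Therefore no sumfree $n$-partition of $[1,nL(n)+1]$ exists, i.e. $S(n) \le nL(n)$.

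The main obstacle I anticipate is the off-by-one bookkeeping between $|X_i|$, $|A_i| = |X_i|-1$, and the average constraint — making sure the pigeonhole threshold, the length requirement $|A_i| \ge L(n)$, and the average bound $\mu(A_i) \le n$ are simultaneously met at exactly $N = nL(n)+1$, and that passing to a length-$L(n)$ block preserves $\mu \le n$. Everything else is a direct assembly of Lemma~\ref{delta sumfree}, Proposition~\ref{prop sdeg} (essentially the same observation), and the definition of $L(n)$; no genuinely hard step remains once the indexing is pinned down.
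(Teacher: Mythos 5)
Your proof is correct and follows essentially the same route as the paper's: assume a sumfree $n$-partition of $[1,nL(n)+1]$, use pigeonhole to get a class of size at least $L(n)+1$, take its discrete derivative, pass to a block of length exactly $L(n)$ with average at most $n$, and contradict the bound $\sd(\widehat{A_i}) \le n-1$ coming from Lemma~\ref{delta sumfree}. The off-by-one bookkeeping and the preservation of the average bound under truncation are handled just as in the paper, so no gap remains.
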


\begin{proof} 
We claim that $[1,nL(n)+1]$ has Schur degree at least $n+1$. This will imply $nL(n)+1 \ge S(n)+1$, the desired conclusion. Assume for a contradiction that $nL(n)+1 \le S(n)$. Let then
\begin{equation}\label{partition}
[1,nL(n)+1] = X_1 \sqcup \dots \sqcup X_n
\end{equation}
be a sumfree partition. By the pigeonhole principle, one of the $X_i$'s has cardinality at least $L(n)+1$, say $|X_1| \ge L(n)+1$. Let $A=\Delta(X_1)$. Then $|A| \ge L(n)$, and $\sd(\hat{A}) \le n-1$ by Proposition~\ref{prop sdeg}. Let $B$ be a block of $A$ of length $|B|=L(n)$. Since $B$ is a minor of $A$, Proposition~\ref{prop minor} implies
\begin{equation}\label{sdeg B}
\sd(\hat{B}) \le \sd(\hat{A}) \le n-1.
\end{equation}
Let $s=\min(X_1)$, $t=\max(X_1)$. Then $\sigma(A) = t-s$ by~\eqref{sigma B}, and $t-s \le nL(n)$ since $X_1 \subseteq [1,nL(n)+1]$ by~\eqref{partition}. Hence $\sigma(B) \le nL(n)$ and so $\mu(B) = \sigma(B)/L(n) \le n.$ Since $|B|=L(n)$, the defining property of $L(n)$ implies $\sd(\hat{B}) \ge n$, contradicting~\eqref{sdeg B}. This concludes the proof of the theorem. 
\end{proof}

\begin{remark}
Proposition~\ref{prop L exists} and Theorem~\ref{conditional} imply the upper bound $$S(n) \le n(R_{n-1}(3)-1)$$
for all $n \ge 2$. However, this also follows by combining \eqref{bound on S} and \eqref{ramsey bound}, namely $S(n) \le R_n(3)-2$ and $R_n(3) \le n(R_{n-1}(3)-1)+2$.
\end{remark}

\subsection{Conjectures}

Given $n \ge 2$, what is the exact value of $L(n)$? It follows from Proposition~\ref{prop L exists} that 
\begin{equation}\label{if S+1= R-1}
\textit{ if }\ S(n-1)+1=R_{n-1}(3)-1 \ \textit{ then } \  L(n)=S(n-1)+1.
\end{equation}
This occurs for $n=2$ and $3$, since by~\eqref{schur numbers} and~\eqref{ramsey numbers}, we have $(S(1),R_1(3))=(1,3)$ and $(S(2),R_2(3))=(4,6)$. Thus $L(2)=2$ as already seen, and $L(3)=5$. As for $n=4$, we have
$$
(S(3),R_3(3))=(13,17).
$$
Proposition~\ref{prop L exists} then implies
$
14 \le L(4) \le 16.
$
We conjecture that $L(4)=14$ and, more generally, that the lower bound on $L(n)$ in~\eqref{L} is optimal. 

\begin{conjecture}\label{conj1} Let $n \ge 2$. Then $L(n)=S(n-1)+1$. That is, every sequence $A$ in $\N_+$ of length $|A|=S(n-1)+1$ and average $\mu(A) \le n$ satisfies \emph{$\sd(\hat{A}) \ge n$}.
\end{conjecture}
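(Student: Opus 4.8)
Since this is a conjecture, what follows is a proposed line of attack rather than a complete proof. Write $A=(a_1,\dots,a_N)$ with $N=S(n-1)+1$ and $\sigma(A)\le nN$, let $s_0=0<s_1<\cdots<s_N$ be its partial sums, and set $X=\{s_0,\dots,s_N\}$, so that $|X|=N+1$, $X$ lies (after the harmless normalization $\min X=0$) in $[0,\sigma(A)]\subseteq[0,nN]$, and $\hat A=(X-X)\cap\N_+$ by Proposition~\ref{X-X}. The plan is to argue by contradiction: assume $\sd(\hat A)\le n-1$ and fix a sumfree partition $\hat A=Y_1\sqcup\cdots\sqcup Y_{n-1}$, with colouring $\chi\colon\hat A\to[1,n-1]$. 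The goal is then to exhibit inside $\hat A$ a subset of Schur degree $\ge n$, which by Lemma~\ref{lem sdeg monotone} is the desired contradiction.

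The most optimistic realization of this plan would be to show that the density constraints on $X$ force $\hat A=(X-X)\cap\N_+$ to contain a scaled interval $t\cdot[1,N]=\{t,2t,\dots,Nt\}$ for some integer $t\ge1$ (then automatically $t\le n$, since $Nt\le\sigma(A)\le nN$); for then $k\mapsto\chi(kt)$ would be a sumfree $(n-1)$-colouring of $[1,N]=[1,S(n-1)+1]$, contradicting \eqref{eq schur}. This is, however, too strong: for $A=(1,\dots,1,m)$ with $m=(n-1)N+1$ --- which has length $N$ and average exactly $n$ --- one gets $\hat A=[1,N-1]\cup[(n-1)N+1,nN]$, whose complement contains more than $n$ consecutive integers (as soon as $n\ge3$), so no progression of common difference $t\le n$ can avoid it and no such $t$ exists; this obstruction already occurs in the provable case $n=3$. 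Hence the genuine argument cannot merely embed a scaled copy of $[1,S(n-1)+1]$: it must instead use the colouring $\chi$ itself --- e.g. via the no-monochromatic-triangle edge-colouring of $K_{N+1}$ it induces, in the spirit of the proofs of Theorems~\ref{main thm} and~\ref{thm converse} --- or decompose $\hat A$ into several pieces of known Schur degree $\ge n$, as in the examples of Section~\ref{sec schur degree}, and conclude by monotonicity.

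The main obstacle is precisely this first step --- extracting a Schur-degree-$\ge n$ certificate inside $\hat A$ --- because the hypotheses provide only very soft information. All one knows about $X$ is that $|X-X|\le 2nN+1<2n\,|X|$, i.e. $X$ has doubling bounded by $2n$ independently of $N$; Freiman-type structure theory then places $X$ inside a generalized arithmetic progression of bounded rank, but deriving from this the exact, non-asymptotic conclusion required --- that $(X-X)\cap\N_+$ has no covering by $n-1$ sumfree sets whenever $|X|=S(n-1)+2$ and $\max X-\min X\le n(S(n-1)+1)$ --- is well beyond current methods. Crude counting is also far too weak: each $Y_i$ is sumfree in $[1,\sigma(A)]$, so $|Y_i|\le\lceil\sigma(A)/2\rceil$, while $|\hat A|\ge N$, which only yields $\sigma(A)\ge 2N/(n-1)$, nowhere near the threshold $nN$ to be excluded. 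It is worth noting that for $n=2$ and $n=3$ the conjecture is already a theorem, but only because there $S(n-1)+1=R_{n-1}(3)-1$, so it follows from Theorem~\ref{main thm} with the average hypothesis unused; the first genuinely open instance is $n=4$, where lengths $14$ and $15$ lie strictly below $R_3(3)-1=16$ and the bound $\mu(A)\le 4$ must do real work, and even there the supporting evidence rests on computer search together with monotonicity applied to ad hoc examples like those in Section~\ref{sec schur degree}. Should the conjecture hold, Theorem~\ref{conditional} at once upgrades it to the recursive bound $S(n)\le n(S(n-1)+1)$ of \eqref{conj S}.
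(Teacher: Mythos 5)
The statement you were asked about is Conjecture~\ref{conj1}: the paper offers no proof of it, only supporting evidence, so there is no argument of the authors to measure your proposal against. What the paper does have is exactly what you reconstructed: the cases $n=2,3$ follow from Proposition~\ref{prop L exists} because there $S(n-1)+1=R_{n-1}(3)-1$, so Theorem~\ref{main thm} applies and the hypothesis $\mu(A)\le n$ is never used; the case $n=4$ is open; some bound on $\mu(A)$ is genuinely needed (the paper's length-$14$ sequence with $\mu(A)=114$ and $\sd(\hat{A})=3$, and Example~\ref{powers of 2}); and a proof would feed into Theorem~\ref{conditional} to give \eqref{conj S}. Your side computations are correct: for $A=(1,\dots,1,(n-1)N+1)$ of length $N$ one indeed gets $\hat{A}=[1,N-1]\cup[(n-1)N+1,nN]$, and no progression $\{t,2t,\dots,Nt\}$ with $t\le n$ avoids the gap $[N,(n-1)N]$ once $n\ge 3$, so the ``scaled interval'' shortcut fails even where the conjecture is known to hold; likewise the doubling estimate $|X-X|\le 2nN+1<2n|X|$ and the counting bound $\sigma(A)\gtrsim 2N/(n-1)$ are right and rightly dismissed as far too weak.

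The gap, which you flag honestly, is that the proposal never produces a mechanism that converts the arithmetic hypothesis $\mu(A)\le n$ into a coloring obstruction: the reduction ``exhibit inside $\hat{A}$ a subset of Schur degree $\ge n$ and apply Lemma~\ref{lem sdeg monotone}'' is essentially a restatement of the goal rather than a step toward it, and the Ramsey-type argument of Theorems~\ref{main thm} and~\ref{thm converse} cannot by itself close the range $S(n-1)+1\le |A|\le R_{n-1}(3)-2$, which is precisely where the conjecture lives (already $14\le |A|\le 15$ for $n=4$). So the verdict is simply that the statement remains a conjecture: your text is an accurate and internally consistent analysis of why the obvious attacks fail, consistent with everything in Section~\ref{sec conjectures}, but it is not a proof, and none exists in the paper.
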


As shown below, this has very interesting consequences for the Schur numbers themselves.

\smallskip
We have seen above that Conjecture~\ref{conj1} holds for $n=2$ and $3$. Does it hold for $n=4$? That is, is it true that for any sequence $A$ in $\N_+$ of length $14$ and average $\mu(A) \le 4$, one has \emph{$\sd(\hat{A}) \ge 4$}? We do not know yet. In any case, some hypothesis bounding $\mu(A)$ from above cannot be completely dispensed of. For instance, consider the sequence
$$
A=(23,375,23,209,209,60,60,60,23,1,60,261,209,23)
$$
of length $14$. Then $|\hat{A}|=83$, and $\sd(\hat{A})=3$ as can be verified. 
But this does not contradict Conjecture~\ref{conj1} for $n=4$, since $\mu(A) =114$ here. Such exotic examples in length $14$ are hard to come by. This one was found with a semi-random search by computer. See also Example~\ref{powers of 2} with the powers of $2$, also of length $14$ but with a still higher average.

\medskip Here is a worthwhile consequence of Conjecture~\ref{conj1} for the Schur numbers, potentially the first known recursive upper bound for them.

\begin{conjecture}\label{conj2} $S(n) \le n(S(n-1)+1)$ for all $n \ge 2$.
\end{conjecture}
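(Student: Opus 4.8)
The plan is to deduce Conjecture~\ref{conj2} directly from Conjecture~\ref{conj1} via Theorem~\ref{conditional}. This is almost immediate: Theorem~\ref{conditional} gives $S(n) \le nL(n)$ for all $n \ge 2$, and Conjecture~\ref{conj1} asserts $L(n) = S(n-1)+1$. Substituting the latter into the former yields exactly $S(n) \le n(S(n-1)+1)$, which is the claim. So the entire content of Conjecture~\ref{conj2} reduces to establishing Conjecture~\ref{conj1}.

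The real work, therefore, is to prove that every sequence $A$ in $\N_+$ with $|A| = S(n-1)+1$ and average $\mu(A) \le n$ satisfies $\sd(\hat{A}) \ge n$. The natural strategy mirrors the proof of Theorem~\ref{main thm}: suppose for contradiction that $\hat{A}$ admits a sumfree partition into $n-1$ classes, and try to extract from this a sumfree $(n-1)$-partition of $[1,S(n-1)]$ — which is impossible by definition of $S(n-1)$. Concretely, writing $A=(a_1,\dots,a_N)$ with $N=S(n-1)+1$, one would want to associate to $A$ a subset $X \subseteq [1,S(n-1)]$ (or at least a subset with Schur degree forced to be $\ge n$) whose difference set, or whose block-sum structure, is controlled by $\hat{A}$. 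The bound $\mu(A) \le n$ is what keeps the relevant sums inside a window of the right size: if $s = \min$ and $t = \max$ of the partial sums $0, a_1, a_1+a_2, \dots, \sigma(A)$, then $t - s = \sigma(A) = N\mu(A)$ is not automatically $\le S(n-1)$, which is exactly where the difficulty lies — the length is $S(n-1)+1$ but the total sum can be as large as $n(S(n-1)+1)$. One would need to argue that a bad coloring of such an $\hat{A}$ can nonetheless be pushed down, perhaps by passing to a well-chosen minor (using Proposition~\ref{prop minor} and the monotonicity of $\sd$), or by a pigeonhole argument on the partial sums $\mod$ something, to land a genuine sumfree $(n-1)$-coloring of $[1,S(n-1)]$.

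The main obstacle is precisely that Conjecture~\ref{conj1} is \emph{open}: the reduction above shows Conjecture~\ref{conj2} follows from it, but no unconditional proof of Conjecture~\ref{conj1} is available, and the exotic length-$14$ example exhibited in the text (with $\sd(\hat{A})=3$ but large average) shows the average hypothesis is genuinely necessary, so any proof must use it in an essential way. Absent a proof of Conjecture~\ref{conj1}, one could instead record the unconditional consequences that \emph{do} follow from Theorem~\ref{conditional} and Proposition~\ref{prop L exists} — namely $S(n) \le n(R_{n-1}(3)-1)$ — while noting, as the paper does, that this is already implied by combining~\eqref{bound on S} and~\eqref{ramsey bound}. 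In short: Conjecture~\ref{conj2} is a corollary of Conjecture~\ref{conj1} together with Theorem~\ref{conditional}, and the hard part is entirely in Conjecture~\ref{conj1}, which I expect to require a careful argument controlling how sumfree colorings of $\hat{A}$ interact with the partial-sum sequence of $A$ under the constraint $\mu(A) \le n$.
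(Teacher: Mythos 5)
Your reduction is exactly the paper's: Conjecture~\ref{conj2} is obtained by substituting the conjectural equality $L(n)=S(n-1)+1$ of Conjecture~\ref{conj1} into the unconditional bound $S(n)\le nL(n)$ of Theorem~\ref{conditional}, which is precisely how the paper derives it (the statement remains conditional, as you correctly note). Your further remarks on how one might attack Conjecture~\ref{conj1} are speculative but do not affect the correctness of this derivation.
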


This directly follows from Theorem~\ref{conditional} and Conjecture~\ref{conj1}. Table~\ref{tab} shows that Conjecture~\ref{conj2} actually holds for $2 \le n \le 5$.

\medskip
\begin{table}[h!]
\centering
\begin{tabular}{|c||c|c|}
\hline
$n$ & $S(n)$ & $n(S(n-1)+1)$ \\
\hline \hline
1 & 1 &  \\
\hline \hline
2 & 4 & 4 \\ 
\hline \hline
3 & 13 & 15 \\ 
\hline \hline
4 & 44 & 56 \\ 
\hline \hline
5 & 160 & 225 \\
\hline
\end{tabular}
\caption{$S(n) \le n(S(n-1)+1)$ for $2 \le n \le 5$}
\label{tab}
\end{table}

\subsection{Comparisons}\label{sec comparisons}
Let us now compare this conjectural upper bound on $S(n)$ with the general currently known ones given by \eqref{bound on S} and \eqref{ramsey bound}, namely
\begin{equation}\label{S and R}
S(n) \le R_n(3)-2, \quad R_n(3) \le n(R_{n-1}(3)-1)+2.
\end{equation}

The currently known bounds on $R_4(3)$ are $51 \le R_4(3) \le 62$, established in \cite{Ch} and \cite{FKR}, respectively. Starting with $R_4(3) \le 62$, the bounds~\eqref{S and R} yield
$$
S(5) \le R_5(3)-2 \le 305, \quad S(6) \le R_6(3)-2 \le 1836.
$$

$\bullet$  For $n=4$, the equality $S(4)=44$ was established by computer~\cite{baumert}. But, as far as theory is concerned, nothing better than $S(4) \le R_4(3)-2 \le 60$ is currently known. A proof of Conjecture~\ref{conj1} for $n=4$ would yield $S(4) \le 56$, still far away from the true value $44$, yet a little closer to it.

\smallskip
$\bullet$ For $n=5$, the bound $S(5) \ge 160$ was first established in~\cite{exoo}, with  equality later conjectured to hold in \cite{FS}. Inded, the exact value $S(5)=160$ has recently been established by massive computer calculations with a certified SAT solver~\cite{heule}. A proof of Conjecture~\ref{conj1} for $n=5$, namely that every sequence $A$ in $\N_+$ such that $|A|=45$ and $\mu(A) \le 5$ satisfies $\sd(\hat{A}) \ge 5$, would imply $S(5) \le 225$. Here again, it would still be far away from the true value, yet it would provide a marked improvement over the currently best known theoretical upper bound $S(5) \le 305$.

\smallskip
$\bullet$ For $n=6$, on the one hand we have $S(6) \ge 536$ by~\cite{FS}, while at the time of writing, the best known upper bound is again the one given above, namely
$$
S(6) \le R_6(3)-2 \le 1836.
$$
By sharp contrast, using the true value $S(5)=160$, Conjecture~\ref{conj2} implies the following substantial improvement.

\begin{conjecture}\label{conj3} $S(6) \le 966$.
\end{conjecture}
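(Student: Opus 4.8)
The plan is straightforward: Conjecture~\ref{conj3} is an immediate numerical instantiation of Conjecture~\ref{conj2} at $n=6$, combined with the now-established exact value $S(5)=160$ quoted above from~\cite{heule}. First I would invoke Conjecture~\ref{conj2}, which asserts $S(n) \le n(S(n-1)+1)$ for all $n \ge 2$; this in turn follows from Theorem~\ref{conditional} (giving $S(n) \le nL(n)$ unconditionally) together with Conjecture~\ref{conj1} (giving $L(n) = S(n-1)+1$). Specializing to $n=6$, one gets $S(6) \le 6\,(S(5)+1)$.

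The second and final step is to substitute the known value $S(5)=160$, which yields
\begin{equation*}
S(6) \le 6\,(S(5)+1) = 6 \cdot 161 = 966.
\end{equation*}
This completes the argument. It is worth emphasizing what is actually being assumed: the bound is conditional on Conjecture~\ref{conj1} at $n=6$, i.e. on the statement that every sequence $A$ in $\N_+$ with $|A| = S(5)+1 = 161$ and average $\mu(A) \le 6$ satisfies $\sd(\hat{A}) \ge 6$. Theorem~\ref{conditional} and Proposition~\ref{prop L exists} are unconditional, so the only unproven input is this one case of Conjecture~\ref{conj1}.

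The main obstacle is therefore not in the present deduction — which is pure arithmetic — but in the underlying Conjecture~\ref{conj1} for $n=6$, which is not resolved here (it is open already for $n=4$). In presenting this, I would keep the statement explicitly flagged as conjectural, note the contrast with the current best unconditional bound $S(6) \le 1836$ coming from $S(6) \le R_6(3)-2$ and $R_6(3) \le 6(R_5(3)-1)+2 \le 1838$, and observe that $966$ still lies well above the best known lower bound $S(6) \ge 536$ from~\cite{FS}, so even the conjecture would leave a substantial gap. No further machinery is needed; the one-line computation $6 \cdot 161 = 966$ is the entire proof once Conjecture~\ref{conj2} is granted.
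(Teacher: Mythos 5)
Your derivation matches the paper's exactly: Conjecture~\ref{conj3} is obtained by specializing Conjecture~\ref{conj2} to $n=6$ and substituting the established value $S(5)=160$, giving $S(6) \le 6(S(5)+1) = 966$, with the conditionality on Conjecture~\ref{conj1} correctly identified. Nothing is missing; this is the same (one-line, conjectural) argument the paper intends.
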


$\bullet$  As for $n=7$, Conjectures~\ref{conj2} and~\ref{conj3} yield the conjectural upper bound $$S(7) \le 6769,$$ to be compared with the known ones given by~\eqref{S and R}, namely $S(7) \le R_7(3) \le 12861$. For a lower bound, the best we currently have is $S(7) \ge 1680$, by~\cite{FS} again.

\small

\bigskip

\textbf{Authors' addresses:} \vspace{-0.2cm}

\begin{itemize}
\item S.~Eliahou, Univ. Littoral C\^ote d'Opale, UR 2597 - LMPA - Laboratoire de Math\'ematiques Pures et Appliqu\'ees Joseph Liouville, F-62228
Calais, France and CNRS, FR2037, France. \vspace{-0.18cm}

e-mail: \url{eliahou@univ-littoral.fr} \vspace{-0.2cm}

\item M.P.~Revuelta, Departamento de Matem\'atica Aplicada I,
Universidad de Sevilla,
Avenida de la Reina Mercedes 4, 
C.P. 41012 Sevilla, Spain. \vspace{-0.18cm}

e-mail: \url{pastora@us.es}
\end{itemize}


\begin{thebibliography}{999}
\bibitem{ABERS} \textsc{S.D. Adhikari, L. Boza, S. Eliahou, M.P. Revuelta and M.I. Sanz}, Equation-regular sets and the Fox-Kleitman conjecture, Discrete Math. 341 (2018) 287--298. \vspace{-0.2cm}
\bibitem{baumert} \textsc{L.D. Baumert}, Sum-free sets, J.P.L. Research Summary, No. 36-10, Vol. 1 (1961) 16--18. \vspace{-0.2cm}
\bibitem{Ch} \textsc{F.R.K. Chung}, On The Ramsey Numbers $N (3, 3, \dots, 3; 2)$, Discrete Math. 5 (1973) 317--321. \vspace{-0.2cm}
\bibitem{E} \textsc{S. Eliahou}, An adaptive upper bound on the Ramsey numbers $R(3,\dots,3)$, Integers (2020), to appear. \vspace{-0.2cm}
\bibitem{exoo} \textsc{G. Exoo},  A lower bound for Schur numbers and multicolor Ramsey numbers of $K_3$,  Electron. J. Combin.  1  (1994), Research Paper 8, approx. 3 pp. (electronic). \vspace{-0.2cm}
\bibitem{FKR} \textsc{S.E. Fettes,  R.L. Kramer and S.P. Radziszowski},  An upper bound of $62$ on the classical Ramsey number $R(3,3,3,3)$, Ars Combin. 72  (2004) 41--63. \vspace{-0.2cm}
\bibitem{FS} \textsc{H. Fredricksen and M.M. Sweet}, Symmetric sum-free partitions and lower bounds for Schur numbers,  Electron. J. Combin.  7  (2000), Research Paper 32, 9 pp. (electronic). \vspace{-0.2cm}
\bibitem{GG} \textsc{R.E. Greenwood and A.M. Gleason}, Combinatorial relations and chromatic graphs, Canadian J. Math 7 (1955) 1--7. \vspace{-0.2cm}
\bibitem{heule} \textsc{M.J.H. Heule}, Schur Number Five. Proceedings of the Thirty-Second AAAI Conference on Artificial Intelligence (AAAI-18). \url{https://arxiv.org/abs/1711.08076}. \vspace{-0.2cm}
\bibitem{radzi} \textsc{S.P. Radziszowski}, Small Ramsey numbers,  Electron. J. Combin.  1  (1994), Dynamic Survey DS1, Revision \#15 (2017), 104pp. (Electronic).\vspace{-0.2cm}
\bibitem{schur} \textsc{I. Schur}, Uber die Kongruenz $x^m+y^m \equiv z^m \pmod{p}$, Jahresber. Dtsch. Math.-Ver.  25 (1916) 114--117. \vspace{-0.2cm}
\bibitem{soifer} \textsc{A. Soifer}, The mathematical coloring book. Mathematics of coloring and the colorful life of its creators. Springer, New York, 2009. ISBN: 978-0-387-74640-1. \vspace{-0.2cm}
\bibitem{W} Wolfram Research, Inc., Mathematica, Version 10, Champaign, IL (2014).  \vspace{-0.2cm}
\bibitem{XuR} \textsc{X. Xu and S.P. Radziszowski}, On some open questions for Ramsey and Folkman numbers. Graph theory--favorite conjectures and open problems. 1, 43--62, Probl. Books in Math., Springer, [Cham],  2016. \vspace{-0.2cm}
\bibitem{XuXC} \textsc{X. Xu, Z. Xie and Z. Chen}, Upper bounds for Ramsey numbers $R_n(3)$ and Schur numbers, Math. Econ. 19 (2002) 81--84. 
\end{thebibliography}
\end{document}